\newtheorem{theorem}{Theorem}[section]
\newtheorem*{theorem*}{Theorem}
\numberwithin{equation}{section}
\begin{document}
\title[Liouville theorem for steady-state solutions of ELs]{Liouville theorem for steady-state solutions of simplified Ericksen-Leslie system}

\author{Yihang Hao}
\address{Institute of Mathematics, Hangzhou Dianzi University, Hangzhou, China}
\email{hanghy@hdu.edu.cn}
\author{Xian-gao Liu}
\address{School of Mathematic Sciences, Fudan University, Shanghai, China}
\email{xgliu@fudan.edu.cn}
\author{Xiaotao Zhang*}
\address{South China Research Center for Applied Mathematics and Interdisciplinary Studies, South China Normal University, Guangzhou, China}
\email{xtzhang@m.scnu.edu.cn}
\thanks{* Corresponding author.}
\maketitle

\begin{abstract}
In this paper, we investigate the steady simplified Ericksen-Leslie system. For three dimension, we obtain Liouville theorems if $u$ and $\nabla d$ satisfy the Galdi's\cite{MR1284206} condition, or some decay conditions. Note that the simplified Ericksen-Leslie system becomes Navier-Stokes equations when $d$ is constant vector.
\end{abstract}
\maketitle

\section{Introduction}

We consider the steady-state solutions of simplified Ericksen-Leslie system
\begin{equation}\label{A1}\left
\{\begin{array}{l}
\large{-\Delta u+u\cdot\nabla u +\nabla P=-\text{div}(\nabla d \odot \nabla d),}\\
\large{\text{div}\ u=0,}\\
\large{-\Delta d +u\cdot \nabla d=|\nabla d|^2 d},\\
\end{array}
\right.
\end{equation}
in $\mathbb{R}^3$, where $u:\mathbb{R}^3 \rightarrow \mathbb{R}^3$ is the velocity field, $P:\mathbb{R}^3\rightarrow \mathbb{R}$ is the scalar pressure and $d:\mathbb{R}^3\rightarrow \mathbb{S}^{2}$ is a unit vector field representing the macroscopic orientation of the nematic liquid crystal molecules. Here $\nabla d \odot \nabla d$ is a symmetric tensor with its component  $(\nabla d \odot \nabla d)_{ij}$
given by $\nabla_i d \cdot \nabla_j d=\sum_{k=1}^n\frac{\partial d_k}{\partial x_i} \cdot \frac{\partial d_k}{\partial x_j}$.
And, along with (\ref{A1}), the additional condition at infinity is as follows:
$$|u(x)|+|\nabla d(x)| \rightarrow 0, \quad \text{as  } |x| \rightarrow \infty.$$

The general Ericksen-Leslie system is modeling the hydrodynamic flow of nematic liquid crystal materials, it is a macroscopic continuum description of the evolution of the material under the influence of fluid velocity field and the macroscopic description of the microscopic orientation of fluid velocity $d$ of rodlike liquid crystals(see \cite{Leslie1968, MR0137403,de1974}). The simplified Ericksen-Leslie system was first proposed by Lin \cite{MR1003435}, it has attracted a lot of interest and generated a lot of interesting research work recently. For two dimensional space, the existence of global Leray-Hopf type weak solutions to the initial and boundary value problem has been proved by Lin-Lin-Wang\cite{MR2646822} and Lin-Wang \cite{MR2745211}; see also Hong \cite{MR2745194} and Xu-Zhang \cite{MR2853534} for related works. For three dimensional space, Lin-Wang \cite{MR3518239} proved the existence of global Leray-Hopf type weak solutions with the condition of the initial and boundary value satisfying $d_0\in \mathbb{S}^2_+$, i.e., $d_0$ takes values in the upper hemisphere.

When $d$ is constant vector, system (\ref{A1}) becomes the well-known stationary Navier-Stokes equations:	
\begin{equation}\label{ns}\left
	\{\begin{array}{l}
	\large{-\Delta v+v\cdot\nabla v +\nabla \pi=0,}\\
	\large{\text{div}\ v=0,}\\
	\end{array}
	\right.
\end{equation}
with the additional condition at infinity:
	\begin{equation}\label{nsb}
	|v(x)|\rightarrow 0\ \ as\ \ |x|\rightarrow 0,
    \end{equation}
where $v : R^n\rightarrow R^n$ is the velocity field, $\pi:  R^n\rightarrow R^n$ is the scalar pressure.
A long-standing open problem is whether the weak solutions of  (\ref{ns}), (\ref{nsb}) with
\begin{equation*}
  \int_{R^3}|\nabla v|^2dx<+\infty\nonumber
\end{equation*}
is trivial(namely, $v=0$, on $\mathbb{R}^3$).
Since then, many applauding results to the open problem have been established. For 3-D case,
Galdi\cite{MR1284206} first proved Liouville theorem for steady-state Navier-Stokes equations with the
condition $v\in L^\frac{9}{2}(\mathbb{R}^3)$, which we call $u$ satisfies Galdi's condition. Recently, Chae-Wolf\cite{MR3548261} improved Galdi's results with the condition $\int_{\mathbb{R}^3} |u|^{\frac92}\{\ln(2+\frac{1}{|u|})\}^{-1}dx <\infty$; Seregin\cite{MR3538409} proved that the velocity field belonging globally to $v\in L^6(\mathbb{R}^3)\cap BMO^{-1}(\mathbb{R}^3)$ is trivial; Kozono-Terasawa-Wakasugi\cite{MR3571910} proved $u=0$ with the condition vorticity $w=o(|x|^{-\frac53})$ or $||u||_{L^{\frac92,\infty}(\mathbb{R}^3)}\leq\delta (\int_{\mathbb{R}^3}|\nabla u|^2)^{1/3} $ for a small constant $\delta$.  Many other results one can refer Chae\cite{MR3162482}, Chae-Yoneda\cite{MR3061045}, Carrillo-Pan-Zhang\cite{CPZ}, Seregin\cite{SG} and Seregin-Wang\cite{SW} and the references therein. For 2-D and axially-symmetric cases, one can refer \cite{MR3014091, MR3543547, MR3345358} and the references therein for more details. Recently, Wendong Wang\cite{dong} and Na Zhao\cite{na} obtained the Liouville Theorem with the conditions that $v$ is axially symmetric and $|v(x_1,x_2,x_3)|\leq C/(1+r\prime)^\alpha$, where $\alpha>\frac23$ and $r\prime=\sqrt{x_1^2 +x_2^2}$.

When $u\equiv 0$, system (\ref{A1}) becomes the harmonic maps:
\begin{equation}\label{harmonic maps00}
  -\Delta d =|\nabla d|^2 d.
\end{equation}
Yau\cite{MR0431040} proved Liouville theorem under the hypothesis that the domain has nonnegative Ricci curvature. Under the same assumption, a generalization to harmonic maps into upper hemisphere and hyperbolic space is contained in the works of Cheng\cite{MR573431} and Choi\cite{MR647905}, see Tam\cite{MR1362965} and Shen\cite{MR1333944} for other cases. Schoen and Uhlenbeck\cite{MR762354} established regularity results and Liouville theorems for minimizing harmonic maps into the Euclidean sphere. Xin\cite{MR895408} generalized these results. Jin\cite{MR1156381} proved Liouville theorem under assumptions on the asymptotic behavior of the maps at infinity. Rigoli-Setti\cite{MR1695783} considered the rotationally symmetric harmonic maps into $\mathbb{R}^n$, upper hemisphere and hyperbolic space. Sinaei\cite{MR3290380} proved Liouville theorems for subharmonic functions.

Before stating our main results, let us introduce some notations. Throughout this paper, we use $L^r=L^r(\mathbb{R}^3)$ denote the standard Lebesgue spaces in $\mathbb{R}^3$, where $r\in [1,\infty]$. We denote
$$D_0^1=\{u\in L^6\mid ||\nabla u||_{L^2} <\infty\},$$
$$\mathring{H}=\{ u\in L^1_{loc}(\mathbb{R}^3) \mid ||\nabla u||_{L^2} < \infty\}.$$

Our main results are the following:
\begin{theorem}\label{u=0}
Assume that $u\in D_0^1$, $d\in L^\infty \cap \mathring{H}$ be solutions to (\ref{A1}) satisfying
\begin{equation}\label{Integration-condition}
 |u|+|\nabla d| \in L^{\frac92}.
\end{equation}

Then, we obtain that $u=0$ and $d$ is constant vector.
\end{theorem}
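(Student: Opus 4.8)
The plan is to adapt Galdi's Caccioppoli-plus-pressure-estimate scheme to the coupled system, exploiting the constraint $|d|=1$ to convert the liquid-crystal stress into a favorable sign term, and then to treat the residual harmonic-map equation by a Pohozaev identity. First I would derive a combined local energy identity. Multiplying the momentum equation of (\ref{A1}) by $u\varphi_R^2$, integrating by parts and using $\mathrm{div}\,u=0$, one meets on the right-hand side the coupling integral $\int(\nabla d\odot\nabla d):\nabla(u\varphi_R^2)$. The key algebraic step is to rewrite $\mathrm{div}(\nabla d\odot\nabla d)=(\Delta d\cdot\nabla)d+\tfrac12\nabla|\nabla d|^2$ and then substitute the third equation in the form $\Delta d=u\cdot\nabla d-|\nabla d|^2 d$; since $|d|=1$ forces $d\cdot\partial_j d=0$, the term $|\nabla d|^2\,d\cdot\nabla d$ drops out and the whole coupling collapses to $-\int|u\cdot\nabla d|^2\varphi_R^2+\tfrac12\int|\nabla d|^2\,u\cdot\nabla\varphi_R^2$. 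This produces the identity
\[
\int|\nabla u|^2\varphi_R^2+\int|u\cdot\nabla d|^2\varphi_R^2=-2\int\varphi_R\,\nabla u:(u\otimes\nabla\varphi_R)+\int\tfrac{|u|^2}{2}\,u\cdot\nabla\varphi_R^2+\int P\,u\cdot\nabla\varphi_R^2+\tfrac12\int|\nabla d|^2\,u\cdot\nabla\varphi_R^2,
\]
whose left-hand side is nonnegative, so that it suffices to drive the right-hand side to zero.

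Second, I would control the pressure. Taking the divergence of the momentum equation gives $-\Delta P=\partial_i\partial_j\big(u_iu_j+(\nabla d\odot\nabla d)_{ij}\big)$. Since $u,\nabla d\in L^{9/2}$ we have $u\otimes u,\ \nabla d\odot\nabla d\in L^{9/4}$, so the Calder\'on--Zygmund theory furnishes a representative $P\in L^{9/4}(\mathbb{R}^3)$; the delicate point here, exactly as in Galdi \cite{MR1284206}, is to show that the harmonic part of the pressure may be discarded, so that this globally integrable representative is the one appearing in the energy identity.

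Third, choosing $\varphi_R$ with $\varphi_R\equiv1$ on $B_R$, $\mathrm{supp}\,\varphi_R\subset B_{2R}$ and $|\nabla\varphi_R|\lesssim R^{-1}$, I would show that every term on the right of the identity tends to $0$ as $R\to\infty$. Each term is supported in the annulus $B_{2R}\setminus B_R$, of volume $\sim R^3$; applying H\"older with the exponents dictated by $u\in L^{9/2}$, $P\in L^{9/4}$, $\nabla d\in L^{9/2}$ and $\nabla u\in L^2$, the powers of $R$ cancel and each remaining factor is a tail of a convergent integral, hence vanishes. Consequently $\int_{\mathbb{R}^3}|\nabla u|^2=0$, so $u$ is constant, and $u\in L^6$ forces $u=0$.

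Finally, with $u=0$ the system reduces to the harmonic-map equation $-\Delta d=|\nabla d|^2 d$ with $\nabla d\in L^2$. Testing this against $(x\cdot\nabla)d$ over $B_R$ and using again $d\cdot\partial_k d=0$ yields the Pohozaev identity $\int_{B_R}|\nabla d|^2=R\int_{\partial B_R}\big(|\nabla d|^2-2|\partial_\nu d|^2\big)\le R\int_{\partial B_R}|\nabla d|^2$. Since $\int_{\mathbb{R}^3}|\nabla d|^2<\infty$, the function $f(R)=\int_{\partial B_R}|\nabla d|^2$ is integrable on $(0,\infty)$, so $\liminf_{R\to\infty}Rf(R)=0$; choosing a sequence $R_j$ along which $R_jf(R_j)\to0$ forces $\int_{\mathbb{R}^3}|\nabla d|^2=0$, i.e. $d$ is constant. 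I expect the main obstacle to be the pressure step -- constructing the globally $L^{9/4}$ representative and rigorously discarding its harmonic part -- together with justifying the integrations by parts at the available level of regularity; by contrast, the algebraic collapse of the coupling term and the annulus estimates are comparatively routine once the energy identity is in hand.
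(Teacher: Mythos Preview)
Your proposal is correct, and the overall architecture---local energy identity with a good-sign coupling, Calder\'on--Zygmund control of $P\in L^{9/4}$, annulus estimates driven by $u,\nabla d\in L^{9/2}$, then a Pohozaev argument for the residual harmonic map---matches the paper. The execution of the energy identity, however, is genuinely different.

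The paper tests the momentum equation with $u\phi_R$ \emph{and} the director equation with $-\Delta d\,\phi_R$, then adds the two so that the cross term $\int (u\cdot\nabla d)\cdot\Delta d\,\phi_R$ cancels. This leaves $\int(|\nabla u|^2+|\Delta d|^2)\phi_R$ on the left and a residual $\int|\nabla d|^4\phi_R$ on the right, which is absorbed via the pointwise inequality $|\nabla d|^4=(d\cdot\Delta d)^2\le|\Delta d|^2$ (this is the ``Cauchy inequality'' step). You instead substitute $\Delta d=u\cdot\nabla d-|\nabla d|^2 d$ directly into $\mathrm{div}(\nabla d\odot\nabla d)=(\Delta d\cdot\nabla)d+\tfrac12\nabla|\nabla d|^2$ before testing, so the nonnegative term $|u\cdot\nabla d|^2$ appears immediately and no absorption is needed. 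Your route is slightly more economical; the paper's route additionally yields control of $\int|\Delta d|^2\phi_R$, which is not needed here but is a small bonus. Your use of $\varphi_R^2$ rather than $\phi_R$ produces the extra cross term $-2\int\varphi_R\nabla u:(u\otimes\nabla\varphi_R)$, but this is easily handled with $\nabla u\in L^2$ and $u\in L^{9/2}$ over the annulus.

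Two further differences worth noting. First, the paper opens with a short bootstrap (Sobolev plus Calder\'on--Zygmund) to upgrade $(u,d)$ to smooth; you flag regularity as a potential obstacle but do not carry it out---filling this in is routine with the given hypotheses. Second, for the harmonic-map step the paper performs the Pohozaev test with a cut-off on $\mathbb{R}^3$ (yielding $\int|\nabla d|^2\phi_R\le C\int_{R\le|x|\le 2R}|\nabla d|^2\to 0$), whereas you integrate over $B_R$ and pass to a subsequence along which $R\int_{\partial B_R}|\nabla d|^2\to 0$; the two variants are equivalent.
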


\begin{theorem}\label{u=0,decay}
Assume that $u\in D_0^1$, $d\in L^\infty \cap \mathring{H}$ be solutions to (\ref{A1}) satisfying
\begin{equation}\label{decay}
 |u(x_1,x_2,x_3)|+|\nabla d(x_1,x_2,x_3)| \leq \frac{C}{|1+\sqrt{x_1^2+x_2^2}|^\alpha},
\end{equation}

where $\alpha >\frac23$. Then, we obtain that $u=0$ and $d$ is constant vector.
\end{theorem}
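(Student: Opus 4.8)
The plan is to run a Galdi-type local energy argument tailored to the coupled system: concentrate every ``bad'' term into an integral carrying a derivative of a cut-off, and then kill those terms using the cylindrical decay (\ref{decay}) together with the global bounds $u\in D_0^1$ and $\nabla d\in L^2$. First I would absorb the gradient term $\frac12\nabla|\nabla d|^2$ produced by $-\mathrm{div}(\nabla d\odot\nabla d)$ into the pressure, setting $\tilde P=P+\frac12|\nabla d|^2$, and use the sphere constraint $|d|=1$ (so that $d\cdot\partial_j d=0$ and $d\cdot\Delta d=-|\nabla d|^2$) to rewrite the momentum equation as $-\Delta u+u\cdot\nabla u+\nabla\tilde P=-(\nabla d)^{T}(u\cdot\nabla d)$. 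Testing it against $u\phi$, where $\phi=\phi_R$ is a radial cut-off with $\phi\equiv1$ on $B_R$, $\mathrm{supp}\,\phi\subset B_{2R}$, $|\nabla\phi|\le C/R$ and $|\Delta\phi|\le C/R^2$, integrating the coupling term by parts and substituting $\Delta d=u\cdot\nabla d-|\nabla d|^2d$, the key point is that the two stray terms $\int(\nabla d\odot\nabla d):(u\otimes\nabla\phi)$ cancel, leaving the clean identity
\[
\int_{\mathbb{R}^3}\bigl(|\nabla u|^2+|u\cdot\nabla d|^2\bigr)\phi=\frac12\int|u|^2\Delta\phi+\int\Bigl(\tfrac12|u|^2+\tilde P\Bigr)u\cdot\nabla\phi .
\]
Since the left-hand side is nonnegative, it then suffices to prove that each term on the right vanishes as $R\to\infty$.

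The second step is the estimate of these boundary integrals, and this is where $\alpha>\frac23$ enters and where the argument must depart from Theorem \ref{u=0}. Because (\ref{decay}) gives decay only in $r'=\sqrt{x_1^2+x_2^2}$ and not along the $x_3$-axis, the solution need not belong to $L^{9/2}(\mathbb{R}^3)$, so I cannot simply quote Theorem \ref{u=0}; instead I would split the annulus $A_R=B_{2R}\setminus B_R$ into the tube $\{r'\le\rho\}$ around the axis and its complement. On $\{r'>\rho\}$ the pointwise bounds $|u|^3$ and $|\nabla d|^2|u|\le C(1+r')^{-3\alpha}$ give, after integrating in $x_3$ over a length $O(R)$, a contribution controlled by $C\int_\rho^{\infty}(1+r')^{1-3\alpha}\,dr'$, which is finite exactly because $\alpha>\frac23$ and tends to $0$ as $\rho\to\infty$; this is the sharp role of the exponent. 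On the tube $\{r'\le\rho\}$, whose transverse measure is only $O(\rho^2R)$, I would discard the decay and instead use the global integrability $u\in L^6$, $\nabla u\in L^2$ and $\nabla d\in L^2$ together with H\"older, so that for each fixed $\rho$ the tube contribution vanishes as $R\to\infty$. Letting $R\to\infty$ first and then $\rho\to\infty$ disposes of the quadratic term $\frac1{R^2}\int_{A_R}|u|^2$, the cubic term $\frac1R\int_{A_R}|u|^3$, and the mixed term $\frac1R\int_{A_R}|\nabla d|^2|u|$ coming from the $\frac12|\nabla d|^2$ part of $\tilde P$.

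The main obstacle is the genuine pressure contribution $\int P\,u\cdot\nabla\phi$, since $P$ is nonlocal and, its sources $u\otimes u$ and $(\nabla d)^{T}(u\cdot\nabla d)$ decaying only cylindrically, one cannot expect $P$ to decay pointwise off the axis. Here I would use that $\int c\,u\cdot\nabla\phi=0$ for any constant $c$ (by $\mathrm{div}\,u=0$) to replace $P$ by $P-\overline P_{A_R}$, its average over the annulus, and then invoke a local Calder\'on--Zygmund / interior pressure estimate to bound $\|P-\overline P_{A_R}\|_{L^q(A_R)}$ in terms of the $u$- and $\nabla d$-norms on a slightly larger annulus. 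Feeding this into the same near-axis/far-axis decomposition, and using $u\otimes u\in L^3$ from $u\in L^6$, should make the pressure term vanish as well. Combining everything gives $\int_{\mathbb{R}^3}\bigl(|\nabla u|^2+|u\cdot\nabla d|^2\bigr)=0$, hence $\nabla u\equiv0$; since $u\in L^6$ this forces $u\equiv0$.

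Finally, with $u\equiv0$ the system reduces to the harmonic-map equation $-\Delta d=|\nabla d|^2d$ into $\mathbb{S}^2$ with finite Dirichlet energy $\int_{\mathbb{R}^3}|\nabla d|^2<\infty$ (from $d\in\mathring H$). For this last step I would use that the stress--energy tensor $\nabla d\odot\nabla d-\frac12|\nabla d|^2I$ is divergence free, a direct consequence of the equation and $|d|=1$, which yields the Pohozaev identity $\frac12\int_{B_R}|\nabla d|^2=R\int_{\partial B_R}\bigl(\tfrac12|\nabla d|^2-|\partial_\nu d|^2\bigr)$. Since $\int_0^{\infty}\!\bigl(\int_{\partial B_R}|\nabla d|^2\bigr)dR<\infty$, there is a sequence $R_k\to\infty$ with $R_k\int_{\partial B_{R_k}}|\nabla d|^2\to0$; evaluating the identity along $R_k$ forces $\int_{\mathbb{R}^3}|\nabla d|^2=0$, i.e. $\nabla d\equiv0$ and $d$ is a constant vector. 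Throughout I am assuming the solutions are smooth, which follows from elliptic regularity for this system.
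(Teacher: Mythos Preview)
Your energy identity is correct and elegant, and the near-axis/far-axis splitting handles the $|u|^3$, $|u|^2$ and $|\nabla d|^2|u|$ terms exactly as you say; the harmonic-map step via the Pohozaev identity is also essentially the paper's argument. The genuine gap is the pressure term. A ``local Calder\'on--Zygmund / interior pressure estimate'' of the form $\|P-\overline P_{A_R}\|_{L^q(A_R)}\le C\|F\|_{L^q(A_R^*)}$ is not available: once you split $P=R_iR_j(F\chi_{A_R^*})+P_{\mathrm{harm}}$, the harmonic part $P_{\mathrm{harm}}$ is fed by $F$ on \emph{all} of $(A_R^*)^c$, and under (\ref{decay}) the source $F=u\otimes u+\nabla d\odot\nabla d$ has no decay in $x_3$, so $\int_{(A_R^*)^c}|F(y)|\,|x-y|^{-3}\,dy$ is not small. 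Nor can you fall back on a global bound: $F\in L^p(\mathbb R^3)$ only for $p\ge 3$, and a direct H\"older computation shows that $P\in L^3$ alone does not make $\tfrac1R\int_{A_R}|P|\,|u|$ vanish when $\tfrac23<\alpha\le 1$. Subtracting the mean does not rescue this, because it is the oscillation of $P_{\mathrm{harm}}$ on $A_R$ that is uncontrolled.

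The paper takes a different route at exactly this point, and it is where the substantive work lies. Instead of a radial cut-off it uses a product cut-off $\eta_R(x)=\phi_R(x_1,x_2)\phi_R(x_3)$, which pushes the boundary region into $\{r'\gtrsim R,\ |x_3|\gtrsim R\}$. On that region it estimates $P$ through the explicit representation
\[
P(x)=-\tfrac13\bigl(|u|^2+|\nabla d|^2\bigr)(x)+\mathrm{p.v.}\!\int_{\mathbb R^3}\partial_i\partial_j\Gamma(x-y)\,F_{ij}(y)\,dy,
\]
splitting the convolution into a near-field piece localised to $\{r'\gtrsim R,\ |x_3|\gtrsim R\}$ (controlled in $L^{3/2}$ by genuine Calder\'on--Zygmund) and several far-field pieces that are bounded \emph{pointwise} by $CR^{-2\gamma}$, $\gamma=\min\{1,\alpha\}$, using the cylindrical decay of $F$ together with the kernel decay $|x-y|^{-3}$. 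This yields $\int_{\{r'\sim R,\ |x_3|\sim R\}}|P|^{3/2}\,dx\le CR^{3-3\gamma}$, precisely what is needed to close the energy inequality. If you wish to keep your radial cut-off and tube decomposition you can, but you will still need this representation-formula analysis of the far field of $P$; the ``local CZ'' shortcut does not bypass it.
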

We note that our result doesn't need the solution be axially symmetric, that is different from \cite{dong} and \cite{na}.
And the following result just need some decay along one direction.
\begin{theorem}\label{u=0,decay-1}
Assume that $u\in D_0^1$, $d\in L^\infty \cap \mathring{H}$ be solutions to (\ref{A1}) satisfying
\begin{equation}\label{decay-1}
 |u(x_1,x_2,x_3)|+|\nabla d(x_1,x_2,x_3)| \leq \frac{C}{|1+|x_3||^\beta},
\end{equation}

where $\beta >1$. Then, we obtain that $u=0$ and $d$ is constant vector.
\end{theorem}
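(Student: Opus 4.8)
The plan is to decouple the system by exploiting the constraint $|d|\equiv 1$, reduce the momentum equation to a \emph{forced} stationary Navier--Stokes inequality in which the director field contributes a favorable sign, run a Caccioppoli-type energy estimate with a cutoff adapted to the one-directional decay (\ref{decay-1}), and finish the director part by a Pohozaev identity.

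\emph{Step 1 (reformulation).} Using $\mathrm{div}\,u=0$ together with the pointwise identities $d\cdot\nabla d=0$ and $d\cdot\Delta d=-|\nabla d|^2$ (consequences of $|d|\equiv1$), I would first rewrite the Ericksen stress using the third equation of (\ref{A1}),
\[
\mathrm{div}(\nabla d\odot\nabla d)_i=\nabla_i d\cdot\Delta d+\tfrac12\partial_i|\nabla d|^2=\big(\nabla_i d\cdot(u\cdot\nabla)d\big)+\tfrac12\partial_i|\nabla d|^2 .
\]
Absorbing the gradient term into a modified pressure $\tilde P:=P+\tfrac12|\nabla d|^2$ turns the first equation into a forced Navier--Stokes system with right-hand side $-(T\cdot u)$, where $T_{ij}=\nabla_i d\cdot\nabla_j d$. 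Testing against $u\phi^2$ for a cutoff $\phi$ and integrating by parts (dropping the convective and pressure bulk terms via $\mathrm{div}\,u=0$) yields
\[
\int|\nabla u|^2\phi^2+\int|(u\cdot\nabla)d|^2\phi^2=-2\int\phi\,\nabla u:(u\otimes\nabla\phi)+\int|u|^2\,u\cdot\nabla\phi\,\phi+2\int\tilde P\,u\cdot\nabla\phi\,\phi .
\]
The key structural point is that the liquid-crystal coupling appears as the \emph{good} term $\int|(u\cdot\nabla)d|^2\phi^2\ge0$ on the left, so the director helps rather than obstructs the estimate.

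\emph{Step 2 (cutoff limit; the main difficulty).} I would use a genuinely anisotropic cutoff, treating the $x_3$ direction (where (\ref{decay-1}) supplies decay) differently from the planar directions. The viscous error is harmless: bounding it by $\big(\int_{\mathrm{supp}\,\nabla\phi}|\nabla u|^2\big)^{1/2}\big(\int|u|^2|\nabla\phi|^2\big)^{1/2}$, the first factor tends to $0$ because $\nabla u\in L^2$ while the decay keeps the second bounded. The obstacle is the convection term $\int|u|^2 u\cdot\nabla\phi\,\phi$ and, above all, the pressure term $\int\tilde P\,u\cdot\nabla\phi\,\phi$: since $u\in L^6$ and $\nabla d\in L^2$ are exactly the finite-Dirichlet threshold at which the Navier--Stokes Liouville problem is open, neither can be closed without extra input. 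Here $\beta>1$ enters precisely because it makes $(1+|x_3|)^{-\beta}$ integrable in $x_3$; combining this $L^1$-in-$x_3$ gain with the planar $L^6$ and Dirichlet control, through H\"older and anisotropic interpolation, is what forces the convection term to vanish in the limit. For the pressure I would use the elliptic relation $\Delta\tilde P=-\partial_i\partial_j(u_iu_j)-\mathrm{div}(T\cdot u)$, split $\tilde P$ into the $u\otimes u$ contribution and the director-stress contribution, estimate each by Calder\'on--Zygmund theory (or a local pressure projection in the spirit of Chae--Wolf \cite{MR3548261} to avoid assuming global pressure integrability), and again invoke the $x_3$-decay to show the localized pressure integral on $\mathrm{supp}\,\nabla\phi$ vanishes as the scale tends to infinity. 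Establishing these two limits under decay in a \emph{single} direction is the heart of the argument and the step I expect to be most delicate.

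\emph{Step 3 (conclusion).} Once the right-hand side vanishes in the limit, the identity forces $\nabla u\equiv0$; as $u\in D_0^1\subset L^6$, the only admissible constant is $u=0$. With $u=0$ the third equation becomes the harmonic-map system $-\Delta d=|\nabla d|^2 d$ into $\mathbb{S}^2$, and I would finish with the Pohozaev identity obtained by testing against $x\cdot\nabla d$: using $d\cdot\nabla d=0$ the right-hand side drops out, leaving $\int_{B_R}|\nabla d|^2\le R\int_{\partial B_R}|\nabla d|^2$. Setting $g(R)=\int_{B_R}|\nabla d|^2$, this reads $g\le Rg'$, so $g$ grows at least linearly unless it vanishes identically; since $g$ is bounded by the finite Dirichlet energy $\|\nabla d\|_{L^2}^2<\infty$ (from $d\in\mathring{H}$), we conclude $g\equiv0$, that is $\nabla d\equiv0$ and $d$ is constant. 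Notably, this final step uses only $d\in\mathring{H}$, so the one-directional decay (\ref{decay-1}) is needed solely to eliminate $u$.
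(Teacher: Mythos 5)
Your Steps 1 and 3 are correct. Step 1 is in fact a cleaner algebraic setup than the paper's: substituting the director equation into $\mathrm{div}(\nabla d\odot\nabla d)$ and using $d\cdot\nabla_i d=0$ puts the coupling on the left as the nonnegative term $\int|(u\cdot\nabla)d|^2\phi^2$, whereas the paper tests the $d$-equation with $-\Delta d\,\phi_R$ and then discards the difference $\int(|\Delta d|^2-|\nabla d|^4)\phi_R\ge 0$ by Cauchy--Schwarz; both routes lead to the same flux terms as in (\ref{3djieduan}). Your Step 3 is essentially the Pohozaev argument the paper runs at the end of Section 2 (spheres and monotonicity $g(R)\le Rg'(R)$ instead of cutoffs), and it correctly uses only $\nabla d\in L^2$.

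The genuine gap is Step 2, which is the entire content of Theorem \ref{u=0,decay-1}, and you do not carry it out: you assert that H\"older and ``anisotropic interpolation'' will make the convection and pressure fluxes vanish, but you prove neither. Two concrete points. (a) You never produce a quantitative pressure estimate. The paper's Section 4 is devoted almost entirely to this: starting from the representation (\ref{p-formula}), it splits $|P|$ into $K_1,\dots,K_6$ over the regions $D_1,\dots,D_4$ and proves $\int_{(B^\prime_{2R}\setminus B^\prime_R)\times(L_{2R}\setminus L_R)}|P|^{3/2}dx\le CR^{3/2}$; only with this bound does the energy inequality close, the pressure flux being $O(R^{1-\beta})$ --- which is exactly where $\beta>1$ (rather than $\beta>2/3$) is needed. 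Your proposal has no analogue of this estimate, so the size of your pressure term is simply unknown. (b) More seriously, the mechanism you hint at for the convection term fails on the lateral part of $\mathrm{supp}\,\nabla\phi$, i.e.\ where $\sqrt{x_1^2+x_2^2}\sim R$ but $|x_3|$ may be small: condition (\ref{decay-1}) gives no smallness there, so the decay bound yields
\[
\frac1R\int_{\{r^\prime\sim R,\ |x_3|\le 2R\}}|u|^3\,dx\ \le\ \frac{C}{R}\,R^2\int_{\mathbb{R}}(1+|x_3|)^{-3\beta}dx_3\ \sim\ CR,
\]
which diverges, while $u\in L^6$ alone gives only $o(1)\cdot R^{1/2}$ with a rate-free $o(1)$; any H\"older interpolation between these two pieces of information produces (rate-free $o(1)$) $\times\, R^{\text{positive power}}$, never a quantity that provably tends to zero. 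The paper sidesteps this region by evaluating all boundary fluxes only on the corner $D=(B^\prime_{2R}\setminus B^\prime_R)\times(L_{2R}\setminus L_R)$, where both $r^\prime\sim R$ and $|x_3|\sim R$ so that every term admits an explicit rate (one may question whether restricting $\mathrm{supp}\,\nabla\eta_R$ to that corner is itself fully justified, but at least on the corner the estimates close); your proposal contains no device whatsoever for the lateral region --- precisely the step you yourself flag as most delicate. As it stands, this is a plausible plan, not a proof.
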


For the harmonic maps, we have the following result.
\begin{theorem}\label{n-D axi-symmatic}
Let $d\in L^\infty$ be a smooth solution of the (\ref{harmonic maps00}). Assume that
\begin{equation}\label{n-D decay}
 \lim_{|x|\rightarrow \infty} |\nabla d(x)|=0,
\end{equation}
and $d$ is rotationally symmetric, that means $d=d(r)$, where $r=\sqrt{x_1^2+x_2^2+x_3^2}$. Then, $d\in\mathbb{S}^2$ must be a constant vector.

\end{theorem}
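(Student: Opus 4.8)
The plan is to exploit the rotational symmetry to collapse (\ref{harmonic maps00}) to a scalar ODE in the radial variable, and then to extract a \emph{closed} equation for the energy density $|\nabla d|^2$. First I would insert the ansatz $d=d(r)$, $r=|x|$, into the harmonic map equation. Writing $d'=\frac{d}{dr}d$, the radial form of the Laplacian in $\mathbb{R}^3$ gives $\Delta d = d''+\frac{2}{r}d'$, while $\nabla_i d = d'\,\frac{x_i}{r}$ yields $|\nabla d|^2=|d'|^2$. Hence the system reduces to the single vector ODE
\[
 d''(r)+\frac{2}{r}\,d'(r)+|d'(r)|^2\,d(r)=0, \qquad r>0.
\]
Because $d$ takes values in $\mathbb{S}^2$, the constraint $|d|^2\equiv 1$ differentiates to $d\cdot d'\equiv 0$, which is exactly what will let me kill the nonlinear term.

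Next I would set $e(r):=|d'(r)|^2$ and differentiate, using the ODE to substitute for $d''$ and the orthogonality $d\cdot d'=0$:
\[
 e'(r)=2\,d'\cdot d'' = 2\,d'\cdot\Bigl(-\tfrac{2}{r}\,d'-e\,d\Bigr) = -\frac{4}{r}\,e(r),
\]
the term $-2e\,(d'\cdot d)$ vanishing. This is a linear first-order equation, so on the connected interval $(0,\infty)$ its solution is $e(r)=C_0\,r^{-4}$ for a single constant $C_0\ge 0$; equivalently $r^4|\nabla d|^2$ is constant along each radius.

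It then remains to force $C_0=0$, and here the behaviour near the origin is decisive. Since $d$ is a smooth solution on all of $\mathbb{R}^3$, the quantity $|\nabla d|$ is bounded on a neighbourhood of the origin, so $e(r)=C_0 r^{-4}$ cannot blow up as $r\to 0$, which forces $C_0=0$. (Equivalently, if $C_0>0$ then $\int_0^1|d'|\,dr=\sqrt{C_0}\int_0^1 r^{-2}\,dr=\infty$, contradicting the boundedness $d\in L^\infty$.) With $C_0=0$ we obtain $d'\equiv 0$, hence $d$ is a constant vector, as claimed; note that the decay hypothesis (\ref{n-D decay}) is automatically consistent with $e(r)=C_0 r^{-4}\to 0$, so the genuine analytic input is the interior regularity at the origin rather than the decay at infinity.

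The only genuinely delicate point is the reduction step itself: one must check that a rotationally symmetric smooth solution really produces a $C^2$ radial profile solving the ODE classically on $(0,\infty)$, and that this profile extends to a regular (even) profile at $r=0$, so that the boundedness of $|\nabla d|$ near the origin may legitimately be invoked. Once that is in place, the remainder is an elementary integration of the linear equation for $e(r)$, with the constraint $d\cdot d'=0$ doing the essential work of decoupling the energy density from the map itself.
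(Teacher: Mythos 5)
Your main argument is correct, but it takes a genuinely different route from the paper's. The paper proceeds via a Bochner-type identity: starting from
$-\Delta \frac{|\nabla d|^2}{2}=-\sum_{k,i,j} d_{k,ij}^2+|\nabla d|^4$, it specializes to the radial ansatz, uses $\sum_k d_k d_{k,r}=0$ to kill the cross terms, and arrives at $-\Delta \frac{|\nabla d|^2}{2}=-\frac{6}{r^2}|\nabla d|^2\leq 0$; the conclusion then follows from the maximum principle for the subharmonic function $|\nabla d|^2$ \emph{together with the decay hypothesis} (\ref{n-D decay}) at infinity. You instead integrate the problem one order lower: the radial ODE plus the orthogonality $d\cdot d'=0$ yields the closed first-order equation $e'=-\frac{4}{r}e$ for $e=|d'|^2$, i.e.\ $(r^4e)'\equiv 0$ on $(0,\infty)$, and you conclude $C_0=0$ from boundedness of $|\nabla d|$ near the origin, never invoking (\ref{n-D decay}) at all. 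Your relation is precisely a first integral of the paper's second-order identity (differentiating $e'=-4e/r$ recovers $\Delta\frac{e}{2}=\frac{6}{r^2}e$). What each approach buys: yours is more elementary and proves a strictly stronger statement --- for smooth rotationally symmetric solutions the decay at infinity is superfluous, interior regularity at the origin alone suffices --- while the paper's Bochner computation is the argument with a chance of generalizing beyond the symmetric setting, where no such first integral exists.

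One caveat: your parenthetical ``equivalent'' argument is not valid as stated. Having $\int_0^1|d'|\,dr=\infty$ does not contradict $d\in L^\infty$, since a bounded map can oscillate with infinite total variation. In fact $d(x)=\bigl(\cos(C/|x|),\sin(C/|x|),0\bigr)$ solves (\ref{harmonic maps00}) on $\mathbb{R}^3\setminus\{0\}$, is bounded, rotationally symmetric, satisfies $|\nabla d|=C|x|^{-2}\to 0$ at infinity, and realizes exactly $e=C^2r^{-4}$ with $C_0=C^2>0$; it fails only to be smooth at the origin. This example shows that boundedness plus decay cannot force $C_0=0$, so the smoothness of $d$ at the origin --- your primary argument --- is the indispensable input. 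Keep that argument and delete the parenthetical.
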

The paper is organized as follows. In section 2, we give the proof of Theorem \ref{u=0}. In section 3, we give the proof of Theorem \ref{u=0,decay}. In section 4, we give the proof of Theorem \ref{u=0,decay-1}. In section 5, we give the proof of Theorem \ref{n-D axi-symmatic}.

\section{Proof of Theorem \ref{u=0}}

In this section, we will prove Theorem \ref{u=0}. We use the method of dealing with Steady-state Navier-Stokes equations and the method of dealing with harmonic maps to obtain the Liouville theorem, see \cite{MR1284206} and \cite{MR2431658}.

First, we prove that $u,d$ are smooth under the conditions in Theorem \ref{u=0}. By the equations (\ref{harmonic maps00}) and the condition (\ref{Integration-condition}), we know that $|\Delta d|\in L^{9/4}$. Then, with Sobolev imbedding theorem and Calder\'{o}n-Zygmund theorem, we have
\begin{equation*}
\|\nabla d\|_{L^9}\leq C_1\|\nabla^2 d\|_{L^{\frac{9}{4}}}\leq C_2\|\Delta d\|_{L^{\frac{9}{4}}}.
\end{equation*}
Then, it is easy to see $|\nabla^2 d||\nabla d| \in L^{\frac95}$ by H\"{o}lder inequality. Since $u\in D_0^1$, we have $u\cdot \nabla u\in L^{\frac32}$ by H\"{o}lder inequality. Therefore, by the equation $(\ref{A1})_1$, one has
$$\Delta u \in L_{loc}^{\frac32}.$$
Then, $\nabla u\in L_{loc}^3$ by Sobolev imbedding theorem. Note that $u\in L^6$, we have $u\cdot \nabla u \in L^2_{loc}$ by H\"{o}lder inequality again. Therefore, by the equation $(\ref{A1})_1$, one has
$$\Delta u \in L_{loc}^{\frac95}.$$
Then, $u\in L^\infty_{loc}$ by Sobolev imbedding theorem. Therefore, by the equation $(\ref{A1})_2$, we know $\Delta d\in L^{9/2}_{loc}$. Then, using Sobolev imbedding theorem and Calder\'{o}n-Zygmund theorem again, we have
\begin{equation}\label{3D-Xian-1}
  \|\nabla d\|_{L^\infty_{loc}}\leq C(\|\nabla d\|_{L^{9}},\|\Delta d\|_{L_{loc}^{\frac{9}{2}}}).
\end{equation}

With $u,d \in L^\infty_{loc}$, it is easy to see the solutions of equation (\ref{A1}) are smooth in $\mathbb{R}^3$.

We consider a standard cut-off function $\psi\in C_c^\infty(\mathbb{R})$ such that
$$\psi(y)=\left\{
    \begin{array}{ll}
      1, & \hbox{if $|y|<1$,} \\
      0, & \hbox{if $|y|>2$,}
    \end{array}
  \right.
$$
and $0\leq \psi(y)\leq 1$ for $1<|y|<2$. For each $R$, define
\begin{equation}\label{phi-R}
  \phi_R(x)=\psi(|x|/R),
\end{equation}
$x\in\mathbb{R}^3$.
Note that $\text{div}(\nabla d \odot \nabla d)=\nabla (|\nabla d|^2/2)+\Delta d\cdot \nabla d$. Taking the inner product of $(\ref{A1})_1$ with $u\phi_R$, $(\ref{A1})_2$ with $-\Delta d\phi_R$ in $L^2(\mathbb{R}^3)$. Adding the two resulting integrations together, and integrating by parts, then we have
\begin{equation}\label{u=0_formula_1}
\begin{split}
   & \int_{\mathbb{R}^3}(|\nabla u|^2+|\Delta d|^2)\phi_R dx \\
  =& \int_{\mathbb{R}^3}(\frac{1}{2}|u|^2+\frac{1}{2}|\nabla d|^2 +P)(u\cdot \nabla \phi_R)dx+\int_{\mathbb{R}^n}\frac{1}{2}|u|^2\Delta \phi_R dx\\
   &
  - \int_{\mathbb{R}^3}|\nabla d|^2(d\cdot \Delta d)\phi_R dx.
\end{split}
\end{equation}
Since $|d|=1$, then we have $-|\nabla d|^2=d\cdot \Delta d$. Therefore, by Cauchy inequality and (\ref{u=0_formula_1}), we obtain
\begin{equation}\label{3djieduan}
  \begin{split}
   & \int_{\mathbb{R}^3}|\nabla u|^2\phi_R dx \\
  \leq& \int_{\mathbb{R}^3}(\frac{1}{2}|u|^2+\frac{1}{2}|\nabla d|^2 +P)(u\cdot \nabla \phi_R)dx+\int_{\mathbb{R}^3}\frac{1}{2}|u|^2\Delta \phi_R dx=\sum_{i=1}^4I_i.
\end{split}
\end{equation}
We estimate $I_i$ for $i=1,2,\cdots,4$ one by one. For $I_1$, H\"{o}lder inequality implies
\begin{equation*}
 \begin{split}
   2I_1 & =\int_{\mathbb{R}^3}|u|^2(u\cdot \nabla \phi_R)dx \leq \int_{R\leq |x|\leq 2R}|u|^3|\nabla \phi_R|dx\\
     & \leq \frac{1}{R}||\nabla \psi||_{L^\infty}\int_{R\leq |x|\leq 2R}|u|^3dx\\
     &\leq \frac{1}{R}||\nabla \psi||_{L^\infty}\left (\int_{R\leq |x|\leq 2R}|u|^{\frac{9}{2}}dx\right)^ {\frac{2}{3}}\left (\int_{R\leq |x|\leq 2R}dx\right)^ {\frac{1}{3}}\\
     &\leq C||\nabla \psi||_{L^\infty}\left (\int_{R\leq |x|\leq 2R}|u|^{\frac{9}{2}}dx\right)^ {\frac{2}{3}}\rightarrow 0 \quad (\text{as} \, R\rightarrow \infty)
 \end{split}
\end{equation*}
With the estimate of $I_1$, we have
\begin{equation*}
 \begin{split}
   2I_2 & =\int_{\mathbb{R}^3}|\nabla d|^2(u\cdot \nabla \phi_R)dx \leq \int_{R\leq |x|\leq 2R}|\nabla d|^2|u||\nabla \phi_R|dx\\
     & \leq \frac{1}{R}||\nabla \psi||_{L^\infty}\int_{R\leq |x|\leq 2R}|\nabla d|^2|u|dx\\
     &\leq \left (\frac{1}{R}||\nabla \psi||_{L^\infty}\int_{R\leq |x|\leq 2R}|u|^3dx\right)^ {\frac{1}{3}}\left (\frac{1}{R}||\nabla \phi||_{L^\infty}\int_{R\leq |x|\leq 2R}|\nabla d|^3dx\right)^ {\frac{2}{3}}\\
     &\rightarrow 0 \quad (\text{as} \, R\rightarrow \infty)
 \end{split}
\end{equation*}
By the Calder\'{o}n-Zygmund theorem,  we have $P\in L^{9/4}_{x,t}(\mathbb{R}^3)$, then
\begin{equation*}
 \begin{split}
   I_3 & =\int_{\mathbb{R}^3}P(u\cdot \nabla \phi_R)dx \leq \frac{1}{R}||\nabla \phi||_{L^\infty}\int_{R\leq |x|\leq 2R}|P||u|dx\\
     &\leq \frac{C}{R}||\nabla \phi||_{L^\infty}\left (\int_{R\leq |x|\leq 2R}|u|^{\frac95}dx\right)^ {\frac{5}{9}}\left (\int_{R\leq |x|\leq 2R}|P|^{\frac{9}{4}}dx\right)^ {\frac{4}{9}}\\
     &\leq C||\nabla \phi||_{L^\infty}\left (\int_{R\leq |x|\leq 2R}|u|^{\frac92}dx\right)^ {\frac{2}{9}}\left (\int_{R\leq |x|\leq 2R}|P|^{\frac{9}{4}}dx\right)^ {\frac{4}{9}}\\
     &\rightarrow 0 \quad (\text{as} \, R\rightarrow \infty)
 \end{split}
\end{equation*}
Finally, for $I_4$, we have
\begin{equation*}
 \begin{split}
   2I_4 & =\int_{\mathbb{R}^3}|u|^2\Delta\phi_Rdx \leq \frac{1}{R^2}||\Delta \phi||_{L^\infty}\int_{R\leq |x|\leq 2R}|u|^2dx\\
     &\leq \frac{1}{R^2}||\Delta \psi||_{L^\infty}\left (\int_{R\leq |x|\leq 2R}|u|^{\frac{9}{2}}dx\right)^ {\frac{4}{9}}\left (\int_{R\leq |x|\leq 2R}dx\right)^ {\frac{5}{9}}\\
     &\leq CR^{-\frac{1}{3}}\left (\int_{R\leq |x|\leq 2R}|u|^{\frac{9}{2}}dx\right)^ {\frac{4}{9}}\rightarrow 0 \quad (\text{as} \, R\rightarrow \infty)
 \end{split}
\end{equation*}
Therefore, with the above estimates, we get
\begin{equation*}
  \int_{\mathbb{R}^3}|\nabla u|^2dx =0.
\end{equation*}
Consequently, $u$ is a constant vector, and is zero due to $u\in L^{\frac92}(\mathbb{R}^3) $.\\
With $u=0$ and the equation $(\ref{A1})_3$, we have the equations (\ref{harmonic maps00}).

Denote $d_{i,j}=\frac{\partial d_i}{\partial x_j}$. Taking the inner product of $(\ref{harmonic maps00})$ with $x\cdot\nabla d\phi_R$ in $L^2(\mathbb{R}^3)$, we have
\begin{equation}\label{d-L1}
  \int_{\mathbb{R}^3} \Delta d_{i}x_j d_{i,j}\phi_R=\int_{\mathbb{R}^3} |\nabla d|^2 x_jd_i d_{i,j}\phi_R=0,
\end{equation}
where we use the fact that $\partial_{x_j} |d|^2 =0$ for $j=1,2,3$. Then, integrating by parts, one has
\begin{eqnarray*}
    && \int_{\mathbb{R}^3} \Delta d_{i}x_j d_{i,j}\phi_R \\
   &=& -\int_{\mathbb{R}^3} (d_{i,k}\delta_{j,k} d_{i,j}\phi_R +d_{i,k}x_jd_{i,jk}\phi_R+d_{i,k}x_jd_{i,j}\partial_{x_k}\phi_{R})\\
   &=& -\int_{\mathbb{R}^3} (|\nabla d|^2\phi_R +d_{i,k}x_jd_{i,j}\partial_{x_k}\phi_{R}) +\int_{\mathbb{R}^3}(\frac32|\nabla d|^2\phi_R +\frac12 |\nabla d|^2 x_j \partial_{x_j} \phi_R)  \\
   &=&\frac12\int_{\mathbb{R}^3} |\nabla d|^2\phi_R +\int_{\mathbb{R}^3}(\frac12 |\nabla d|^2 x_j \partial_{x_j}\nabla \phi_R-d_{i,k}x_jd_{i,j}\partial_{x_k}\phi_{R}),
\end{eqnarray*}
where
$$\delta _{jk}=\left\{
                 \begin{array}{ll}
                   1, & \hbox{$x_j=x_k$;} \\
                   0, & \hbox{$x_j\neq x_k$.}
                 \end{array}
               \right.
$$
Therefore, by the definition of $\phi_R$, one has
$$\int_{\mathbb{R}^3} |\nabla d|^2\phi_R\leq C\int_{R\leq |x|\leq 2R} |\nabla d|^2\rightarrow 0 \quad (\text{as} \, R\rightarrow \infty).$$

\section{Proof of Theorem \ref{u=0,decay}}

By the above section, we known that $u, d$ are smooth. By the equation (\ref{A1}), it is easy to see
$$\Delta P= -\text{div div}\, (u\otimes u + \nabla d \odot \nabla d).$$
Then we have
\begin{equation}\label{p-formula}
  P(x)=-\frac13 (|u(x)|^2+ |\nabla d(x)|^2) +P.V.\int_{\mathbb{R}^3} \partial_{y_i}\partial_{y_j}\Gamma(x-y)F_{ij}(y)dy,
\end{equation}
where $\Gamma =\frac{1}{4\pi|x|} $ and $F_{ij}=u_iu_j+\partial_{x_i}d\cdot \partial_{x_j} d$. The proof of equation (\ref{p-formula}) is similar to the Navier-Stokes case(see \cite{na} and \cite{MR2925135}).

Let $x\in (B^\prime_{2R}\setminus B^\prime_R)\times (L_{2R}\setminus L_R)$, $\psi_R=\phi_{9R}-\phi_{R/4}$, $\sigma_R(x_1,x_2,x_3)=\psi_R(x_1,x_2)\psi_R(x_3)$(see the equation (\ref{phi-R}) for the definition of $\phi_R$ in section 2), where $B^\prime_R=\{\sqrt{x_1^2+x_2^2}\leq R\}$ and $L_R= \{|y_3|\leq R \}$. Then
\begin{eqnarray}\label{p-f-0}
  P(x) &=& -\frac13 (|u(x)|^2+ |\nabla d(x)|^2) +P.V.\int_{\mathbb{R}^3} \partial_{y_i}\partial_{y_j}\Gamma(x-y)F_{ij}(y)\sigma_R(y)dy \\
       && +\int_{\mathbb{R}^3} \partial_{y_i}\partial_{y_j}\Gamma(x-y)F_{ij}(y)(1-\sigma_R(y)))dy .\nonumber
\end{eqnarray}
Note that $\sigma(x)=1$ in $(B^\prime_{9R}\setminus B^\prime_{R/2})\times(L_{9R}\setminus L_{R/2})$ and supp$\,\sigma \subset (B^\prime_{18R}\setminus B^\prime_{R/4})\times (L_{18R}\setminus L_{R/4})$. Therefore,
\begin{eqnarray}\label{p-f-1}
  |P(x)| &\leq& \frac13 (|u(x)|^2+ |\nabla d(x)|^2) +\mid P.V.\int_{\mathbb{R}^3} \partial_{y_i}\partial_{y_j}\Gamma(x-y)F_{ij}(y)\sigma_R(y)dy\mid \\
       && +C\int_{D_1} \frac{1}{|x-y|^3}|F_{ij}(y)|dy + C\int_{D_2} \frac{1}{|x-y|^3}|F_{ij}(y)|dy\nonumber\\
       &&+C\int_{D_3} \frac{1}{|x-y|^3}|F_{ij}(y)|dy+C\int_{D_4} \frac{1}{|x-y|^3}|F_{ij}(y)|dy \nonumber\\
       &:=& \sum_{i=1}^6 J_i(x), \nonumber
\end{eqnarray}
where
$$D_1=B^\prime_{9R}\times L_{R/2}\cap B^\prime_{R/2}\times(L_{9R}\setminus L_{R/2}),$$
$$D_2=B^\prime_{9R}\times(\mathbb{R}\setminus L_{9R}),\quad D_3=(\mathbb{R}^3\setminus B_{9R})\times L_{9R},$$
$$D_4=(\mathbb{R}^3\setminus B_{9R})\times (\mathbb{R}\setminus L_{9R}).$$

Then we will estimate $J_1,\cdots,J_6$ step by step. By the condition (\ref{decay}) in Theorem \ref{u=0,decay}, one has
\begin{equation}\label{j-1}
  \int_{R\leq |x_3|\leq 2R}\int_{R\leq r\prime\leq 2R}|J_1(x)|^{\frac32}dx\leq C R^{3-3\alpha},
\end{equation}
where $r\prime=\sqrt{x_1^2+x_2^2}$ and $C$ is independent on $R$. And by Calder\'{o}n-Zygmund theorem, we have
\begin{equation}\label{j-2}
   \int_{R\leq |x_3|\leq 2R}\int_{R\leq r\prime\leq 2R} |J_2(x)|^{\frac32}dx \leq C\sum_{i,j}\int_{\mathbb{R}^3}|F_{ij}(y)\sigma(y)|^{\frac32}dy\leq CR^{3-3\alpha},
\end{equation}
where $C$ is independent on $R$.\\
 Now, we give the decay rate of $J_3,J_4,J_5,J_6$ in the equation (\ref{p-f-1}). By the condition (\ref{decay}) in Theorem \ref{u=0,decay}, one can see that $|F_{ij}(y_1,y_2,y_3)|\leq C/(1+r\prime)^{2\alpha}$, where $\alpha>\frac43$. Let $R>2$, then

\begin{eqnarray}\label{j-3}
  |J_3| &=&C\int_{D_1} \frac{1}{|x-y|^3}|F_{ij}(y)|dy \\
       &\leq & \frac{2}{R^3}\int_{D_1} |F_{ij}(y)|dy \nonumber\\
       &\leq& \frac{C}{R^3}\int_{D_1} \frac{1}{(1+r\prime)^{2\alpha}}dy \nonumber\\
       &\leq& \frac{C}{R^2}\int_{r\prime\leq 9R} \frac{1}{(1+r\prime)^{2\alpha}}dy_1dy_2  \nonumber\\
       &\leq& CR^{-2\gamma},\nonumber
\end{eqnarray}
where $\gamma =\min\{1,\alpha\}$ and $C$ is independent on $R$. Obviously, $\gamma>\frac23$. It is easy to see that $|y-x|\geq |y|-|x|\geq \frac12 (\sqrt{y_1^2+y_2^2}+|y_3|)-|x|$. For $J_4$, we have
\begin{eqnarray*}
  J_4 &=& C\int_{D_2} \frac{1}{|x-y|^3}|F_{ij}(y)|dy\\
    &\leq& C\int_{r\prime \leq 9R} \int_{9R}^\infty  \frac{1}{\mid|y_3|+r\prime -2|x|\mid^3}\frac{1}{(1+r\prime)^{2\alpha}}dy_3dy_1dy_2\nonumber \\
   &\leq& C \int_{r\prime \leq 9R} \frac{1}{\mid 9 R +r\prime -2|x|\mid^2}\frac{1}{(1+r\prime)^{2\alpha}}dy_1dy_2\nonumber \\
   &\leq& C \frac{1}{R^2}\int_{r\prime \leq 9R} \frac{1}{(1+r\prime)^{2\alpha}}dy_1dy_2\nonumber \\
   &\leq& C \frac{1}{R^2}(1+R^{2-2\alpha})\nonumber\\
   &\leq& CR^{-2\gamma},\nonumber
\end{eqnarray*}
where $\gamma$ is same as the equation (\ref{j-3}) and $C$ is independent on $R$. For $J_5$, we have
\begin{eqnarray*}
  J_5 &=&C\int_{D_3} \frac{1}{|x-y|^3}|F_{ij}(y)|dy\\
  &\leq& C\int_{r\prime\geq 9 R} \int_{0}^{9R}  \frac{1}{\mid|y_3|+r\prime -2|x|\mid^3}\frac{1}{(1+r\prime)^{2\alpha}}dy_3dy_1dy_2\nonumber \\
  &\leq&   C\int_{r\prime\geq 9 R} \left(\frac{1}{\mid r\prime -2|x|\mid^2} +\frac{1}{\mid 9 R+r\prime -2|x|\mid^2}\right)\frac{1}{(1+r\prime)^{2\alpha}}dy_1dy_2\nonumber\\
   &\leq& C \int_{r\prime\geq 9 R} \left(\frac{1}{\mid r\prime -2|x|\mid^{2+2\alpha}} +\frac{1}{\mid 1+r\prime \mid^{2+2\alpha}}\right)dy_1dy_2\nonumber\\
   &\leq& C \left(\frac{1}{\mid 9R -2|x|\mid^{2\alpha}} +\frac{1}{\mid 1+9 R \mid^{2\alpha}}\right)\nonumber\\
   &\leq& CR^{-2\alpha},\nonumber
\end{eqnarray*}
where $C$ is independent on $R$.
For $J_6$, we have
\begin{eqnarray*}
  J_6 &=&C\int_{D_4} \frac{1}{|x-y|^3}|F_{ij}(y)|dy\\
   &\leq& C\int_{r\prime >9R} \int_{9 R}^\infty  \frac{1}{\mid|y_3|+r\prime -2|x|\mid^3}\frac{1}{(1+r\prime)^{2\alpha}}dy_3dy_1dy_2\nonumber\\
   &\leq& C \int_{r\prime > 9R} \frac{1}{\mid 9 R +r\prime -2|x|\mid^2}\frac{1}{(1+r\prime)^{2\alpha}}dy_1dy_2 \nonumber \\
   &\leq& C \int_{r\prime > 9R} \frac{1}{(1+r\prime)^{2+2\alpha}}dy_1dy_2\nonumber\\
  &\leq & CR^{-2\alpha},\nonumber
\end{eqnarray*}
where $C$ is independent on $R$.
 Therefore,
\begin{equation}\label{j-3-6}
  |J_3|+|J_4|+|J_5|+|J_6| \leq CR^{-2\gamma},
\end{equation}
where $\gamma$ is same as the equation (\ref{j-3}) and $C$ is independent on $R$. \\
Combining (\ref{p-f-1}), (\ref{j-1}), (\ref{j-2}) and (\ref{j-3-6}), we have
\begin{eqnarray}
  \int_{R\leq |y_3|\leq 2R}\int_{R\leq r\prime\leq 2R} |P(x)|^{\frac32} dx &\leq& \sum_{i=1}^6 \int_{R\leq |y_3|\leq 2R}\int_{R\leq r\prime\leq 2R}|J_i(x)|^{\frac32}dx \\
  &\leq& C R^{3-3\gamma},\nonumber
\end{eqnarray}
where $\gamma$ is same as the equation (\ref{j-3}) and $C$ is independent on $R$. \\
With (\ref{3djieduan}), we have
\begin{eqnarray*}
     && \int_{\mathbb{R}^3}|\nabla u|^2\eta_R dx \\
  &\leq& \int_{\mathbb{R}^3}(\frac{1}{2}|u|^2+\frac{1}{2}|\nabla d|^2 +P)(u\cdot \nabla \eta_R)dx+\int_{\mathbb{R}^n}\frac{1}{2}|u|^2\Delta \eta_R dx\\
  &\leq &\frac{C}{R}\int_{D}(|u|^3+|u||\nabla d|^2 +|Pu|)dx +\frac{C}{R^2}\int_{D}|u|^2dx\\
  &\leq&\frac{C}{R}\left(\int_{D}|P|^{\frac32}\right)^{\frac23}\left(\int_{D}|u|^3\right)^{\frac13}+\frac{C}{R}R^{3-3\alpha}+\frac{C}{R^2}R^{3-2\alpha}\\
  &\leq&\frac{C}{R}R^{2-2\gamma}R^{1-\alpha} +CR^{2-3\alpha}+CR^{1-2\alpha}\\
  &\leq&CR^{2-2\gamma-\alpha} +CR^{2-3\alpha}+CR^{1-2\alpha},
\end{eqnarray*}
where $\eta_R(x_1,x_2,x_3)=\phi_R(x_1,x_2)\phi_R(x_3)$ and $D=(B^\prime_{2R}\setminus B^\prime_R)\times(L_{2R}\setminus L_{R})$. Since $\gamma>\frac23$ and $\alpha>\frac23$, it is easy to see that
$$2-2\gamma-\alpha<0,\quad 2-3\alpha<0,\quad 1-2\alpha<0.$$
Let $R\rightarrow \infty$, we have
$$\int_{\mathbb{R}^3} |\nabla u|^2=0.$$
Then, $u\equiv 0$ by $u\in L^6$. Therefore, $d$ satisfies the harmonic maps with $d\in \mathring{H}$, then $d$ is constant vector by the above section.

\section{Proof of Theorem \ref{u=0,decay-1}}
In this section, the notation is same as section 3. To prove the theorem \ref{u=0,decay-1}, we also estimate the pressure $P$ in the first step.
\begin{eqnarray}\label{p-f-2}
  |P(x)| &\leq& \frac13 (|u(x)|^2+ |\nabla d(x)|^2) +\mid P.V.\int_{\mathbb{R}^3} \partial_{y_i}\partial_{y_j}\Gamma(x-y)F_{ij}(y)\sigma_R(y)dy\mid \\
       && +C\int_{D_1} \frac{1}{|x-y|^3}|F_{ij}(y)|dy + C\int_{D_2} \frac{1}{|x-y|^3}|F_{ij}(y)|dy\nonumber\\
       &&+C\int_{D_3} \frac{1}{|x-y|^3}|F_{ij}(y)|dy+C\int_{D_4} \frac{1}{|x-y|^3}|F_{ij}(y)|dy \nonumber\\
       &:=& \sum_{i=1}^6 K_i(x), \nonumber
\end{eqnarray}
where
$$D_1=B^\prime_{9R}\times L_{R/2}\cap B^\prime_{R/2}\times(L_{9R}\setminus L_{R/2}),$$
$$D_2=B^\prime_{9R}\times(\mathbb{R}\setminus L_{9R}),\quad D_3=(\mathbb{R}^3\setminus B_{9R})\times L_{9R},$$
$$D_4=(\mathbb{R}^2\setminus B_{9R})\times (\mathbb{R}\setminus L_{9R}).$$
Then we will estimate $K_1,\cdots,K_6$ step by step. By the condition (\ref{decay-1}) in Theorem \ref{u=0,decay-1}, one has
\begin{equation}\label{k-1}
  \int_{R\leq |y_3|\leq 2R}\int_{R\leq r\prime\leq 2R}|K_1(x)|^{\frac32}dx\leq C R^{3-3\beta},
\end{equation}
where $r\prime=\sqrt{x_1^2+x_2^2}$ and $C$ is independent on $R$. And by Calder\'{o}n-Zygmund theorem, we have
\begin{equation}\label{k-2}
   \int_{R\leq |y_3|\leq 2R}\int_{R\leq r\prime\leq 2R} |K_2(x)|^{\frac32}dx \leq C\sum_{i,j}\int_{\mathbb{R}^3}|F_{ij}(y)\sigma(y)|^{\frac32}dy\leq CR^{3-3\beta},
\end{equation}
where $C$ is independent on $R$.\\
 Now, we give the decay rate of $K_3,K_4,K_5,K_6$ in the equation (\ref{p-f-2}). By the condition (\ref{decay-1}) in Theorem \ref{u=0,decay-1}, one can see that $|F_{ij}(y_1,y_2,y_3)|\leq C/(1+|y_3|)^{2\beta}$, where $\beta>1$. Let $R>2$, then

\begin{eqnarray}\label{k-3}
  |K_3| &=&C\int_{D_1} \frac{1}{|x-y|^3}|F_{ij}(y)|dy \\
       &\leq & \frac{2}{R^3}\int_{D_1} |F_{ij}(y)|dy \nonumber\\
       &\leq& \frac{C}{R^3}\int_{D_1} \frac{1}{(1+|y_3|)^{2\beta}}dy \nonumber\\
       &\leq& \frac{C}{R}\int_{0}^{9R} \frac{1}{(1+|y_3|)^{2\beta}}dy_3  \nonumber\\
       &\leq& CR^{-1},\nonumber
\end{eqnarray}
where $C$ is independent on $R$. For $K_4$, we have
\begin{eqnarray*}
  K_4 &=& C\int_{D_2} \frac{1}{|x-y|^3}|F_{ij}(y)|dy\\
    &\leq& C\int_{r\prime \leq 9R} \int_{9R}^\infty  \frac{1}{\mid|y_3|+r\prime -2|x|\mid^3}\frac{1}{(1+|y_3|)^{2\beta}}dy_3dy_1dy_2\nonumber \\
   &\leq& \frac{C}{R^{2\beta-1}} \int_{r\prime \leq 9R} \frac{1}{\mid 9 R +r\prime -2|x|\mid^2}dy_1dy_2\nonumber \\
   &\leq& \frac{C}{R^{2\beta-1}}(1+ln R)\nonumber\\
   &\leq& CR^{-1},\nonumber
\end{eqnarray*}
where $C$ is independent on $R$. For $K_5$, we have
\begin{eqnarray*}
  K_5 &=&C\int_{D_3} \frac{1}{|x-y|^3}|F_{ij}(y)|dy\\
  &\leq& C\int_{r\prime\geq 9 R} \int_{0}^{9R}  \frac{1}{\mid|y_3|+r\prime -2|x|\mid^3}\frac{1}{(1+|y_3|)^{2\beta}}dy_3dy_1dy_2\nonumber \\
   &\leq& C\int_{r\prime\geq 9 R} \int_{0}^{9R}  \frac{1}{\mid r\prime -2|x|\mid^3}\frac{1}{(1+|y_3|)^{2\beta}}dy_3dy_1dy_2\nonumber \\
  &\leq&   C\int_{r\prime\geq 9 R} \frac{1}{\mid r\prime -2|x|\mid^3} dy_1dy_2\nonumber\\
   &\leq& C \frac{1}{\mid 9R -2|x|\mid} \nonumber\\
   &\leq& CR^{-1},\nonumber
\end{eqnarray*}
where $C$ is independent on $R$.
For $K_6$, we have
\begin{eqnarray*}
  K_6 &=&C\int_{D_4} \frac{1}{|x-y|^3}|F_{ij}(y)|dy\\
   &\leq& C\int_{r\prime >9R} \int_{9 R}^\infty  \frac{1}{\mid|y_3|+r\prime -2|x|\mid^3}\frac{1}{(1+|y_3|)^{2\beta}}dy_3dy_1dy_2\nonumber\\
   &\leq& C \int_{ 9R}^\infty \frac{1}{\mid 9 R +|y_3| -2|x|\mid}\frac{1}{(1+|y_3|)^{2\beta}}dy_3 \nonumber \\
   &\leq& C \int_{ 9R}^\infty \frac{1}{(1+|y_3|)^{1+2\beta}}dy_1dy_2\nonumber\\
  &\leq & CR^{-2\beta},\nonumber
\end{eqnarray*}
where $C$ is independent on $R$.
 Therefore,
\begin{equation}\label{k-3-6}
  |K_3|+|K_4|+|K_5|+|K_6| \leq CR^{-1},
\end{equation}
where and $C$ is independent on $R$. \\
Combining (\ref{p-f-2}), (\ref{k-1}), (\ref{k-2}) and (\ref{k-3-6}), we have
\begin{eqnarray}
  \int_{R\leq |y_3|\leq 2R}\int_{R\leq r\prime\leq 2R} |P(x)|^{\frac32} dx &\leq& \sum_{i=1}^6 \int_{R\leq |y_3|\leq 2R}\int_{R\leq r\prime\leq 2R}|K_i(x)|^{\frac32}dx \\
  &\leq& C R^{\frac32},\nonumber
\end{eqnarray}
where $C$ is independent on $R$. \\
With (\ref{3djieduan}), we have
\begin{eqnarray*}
     && \int_{\mathbb{R}^3}|\nabla u|^2\eta_R dx \\
  &\leq& \int_{\mathbb{R}^3}(\frac{1}{2}|u|^2+\frac{1}{2}|\nabla d|^2 +P)(u\cdot \nabla \eta_R)dx+\int_{\mathbb{R}^n}\frac{1}{2}|u|^2\Delta \eta_R dx\\
  &\leq &\frac{C}{R}\int_{D}(|u|^3+|u||\nabla d|^2 +|Pu|)dx +\frac{C}{R^2}\int_{D}|u|^2dx\\
  &\leq&\frac{C}{R}\left(\int_{D}|P|^{\frac32}\right)^{\frac23}\left(\int_{D}|u|^3\right)^{\frac13}+\frac{C}{R}R^{3-3\beta}+\frac{C}{R^2}R^{3-2\beta}\\
  &\leq&\frac{C}{R}R^{1}R^{1-\beta} +CR^{2-3\beta}+CR^{1-2\beta}\\
  &\leq&CR^{1-\beta} +CR^{2-3\beta}+CR^{1-2\beta},
\end{eqnarray*}
where $\eta_R(x_1,x_2,x_3)=\phi_R(x_1,x_2)\phi_R(x_3)$ and $D=(B\prime_{2R}\setminus B^\prime_R)\times(L_{2R}\setminus L_{R})$. Since $\beta>1$, it is easy to see that
$$1-\beta<0,\quad 2-3\beta<0,\quad 1-2\beta<0.$$
Let $R\rightarrow \infty$, we have
$$\int_{\mathbb{R}^3} |\nabla u|^2=0.$$

\section{The Liouville Theorem of harmonic maps (\ref{harmonic maps00})}

\begin{proof}[Proof of Theorem \ref{n-D axi-symmatic}]
With the equations (\ref{harmonic maps00}), $d_{k,i}$ satisfies
\begin{equation}\label{n-D--d-k,i}
-\Delta d_{k,i}=|\nabla d|^2 \cdot d_{k,i}+2d_k\cdot \sum_{l,j=1}^nd_{l,j}\cdot d_{l,ji},
\end{equation}
for $k=1,2,3,\,i=1,2,3$.
Multiplying both sides of equation (\ref{n-D--d-k,i}) by $d_{k,i}$ and taking sum of them, we have
\begin{equation*}
  -\Delta \frac{|\nabla d|^2}{2}=-\sum_{k=1}^3\sum_{i,j=1}^3 d_{k,ij}+|\nabla d|^4 + 2\sum_{i=1}^3\left(\sum_{k=1}^3 d_{k,i}\cdot d_k\right)\cdot \left (\sum_{l,j=1}^3 d_{l,j}\cdot d_{l,ji}\right).
\end{equation*}

Since $|d|^2=1$, it is easy to see that $\sum_{k=1}^3 d_k\cdot d_{k,i}=0$ for $i=1,2,\cdots,n$. Therefore, we have
\begin{equation}\label{n-D nabla}
  -\Delta \frac{|\nabla d|^2}{2}=-\sum_{k=1}^3\sum_{i,j=1}^3 d_{k,ij}^2+|\nabla d|^4.
\end{equation}

By direct calculation, for $d=d(r)$, we have
\begin{equation}\label{nabla  mo}
 |\nabla d|^2=\sum_{k=1}^3 d_{k,r}^2,
\end{equation}

\begin{equation}\label{nabla2 mo}
\begin{split}
  \sum_{k=1}^3\sum_{i,j=1}^3 d_{k,ij}^2   &=  \sum_{k=1}^3\sum_{i,j=1}^3 \left (d_{k,rr}\cdot \frac{x_i\cdot x_j}{r^2}+d_{k,r}\cdot\frac{\delta_{ij}}{r}-d_{k,r}\cdot\frac{x_i\cdot x_j}{r^3}\right )^2\\
    & =\sum_{k=1}^3\left( d_{k,rr}^2+d_{k,r}^2\frac{n-1}{r^2}\right )\\
  & =\frac{2}{r^2}\cdot |\nabla d|^2 +\sum_{k=1}^3 d_{k,rr}^2.
\end{split}
\end{equation}

Due to $d=d(r)$ and the equation (\ref{harmonic maps00}), we have
\begin{equation*}
 -d_{k,rr}=d_{k,r}\cdot \frac{2}{r}+d_k\cdot|\nabla d|^2.
\end{equation*}
Then, with the equation (\ref{nabla  mo}), we obtain
\begin{equation}\label{d-k,rr}
 \begin{split}
    \sum_{k=1}^3 d_{k,rr}^2 & = \sum_{k=1}^3 \left (d_{k,r}\cdot \frac{2}{r}+d_k\cdot |\nabla d|^2 \right)^2\\
     & =\sum_{k=1}^3\left (d_{k,r}^2\cdot \frac{(n-1)^2}{r^2}+ 2d_k\cdot d_{k,r}\cdot |\nabla d|^2\cdot\frac{n-1}{r}  + d_k^2\cdot |\nabla d|^4 \right)\\
 &=\frac{(n-1)^2}{r^2}\cdot |\nabla d|^2+ |\nabla d|^4.
 \end{split}
\end{equation}

With the equations (\ref{n-D nabla}), (\ref{nabla2 mo}) and (\ref{d-k,rr}), we have
\begin{equation*}
  -\Delta \frac{|\nabla d|^2}{2}=-\frac{6}{r^2}|\nabla d|^2\leq 0 \quad \text{in } \mathbb{R}^3.
\end{equation*}
Then, the result follows the standard maximum principle for elliptic equations(see \cite{GT01}).

\end{proof}

\section*{Acknowledgments}  The authors would like to thank Prof. J.X. Hong and D.r. Tao Huang for helpful comments. Yihang Hao is partial supported by NSFC no.11526068. Xiangao Liu is partial supported by NSFC no.11631011.

\end{document}